\documentclass[oneside,english]{amsart}
\usepackage[T3,T1]{fontenc}
\usepackage[latin9]{inputenc}
\usepackage{xcolor}
\usepackage{babel}
\usepackage{mathtools}
\usepackage{amstext}
\usepackage{amsthm}
\usepackage{amssymb}
\usepackage{cancel}
\usepackage{mathdots}
\usepackage[bookmarks=true,bookmarksnumbered=true,bookmarksopen=false,
 breaklinks=true,pdfborder={0 0 1},backref=false,colorlinks=true]
 {hyperref}
\hypersetup{
 linkcolor=blue,citecolor=blue}

\makeatletter
\numberwithin{equation}{section}
\numberwithin{figure}{section}

\PassOptionsToPackage{english}{babel}
\usepackage{babel}
\usepackage{enumerate}
\usepackage{verbatim}
\usepackage{mathrsfs}
\usepackage{cite}
\usepackage{bbm}

\mathtoolsset{showonlyrefs}

\usepackage{physics}
\usepackage{tikz}
\usepackage{yhmath}
\usepackage{siunitx}
\usepackage{array}
\usepackage{multirow}
\usepackage{gensymb}
\usepackage{tabularx}
\usepackage{extarrows}
\usepackage{booktabs}
\usetikzlibrary{fadings}
\usetikzlibrary{patterns}
\usetikzlibrary{shadows.blur}
\usetikzlibrary{shapes}
\usetikzlibrary{arrows.meta}

\tikzset{
    ShortArrow/.tip={Triangle[length=2pt, width=3pt]},
    >=ShortArrow
}

\providecommand{\keywords}[1]{\textbf{Index terms---} #1}

\DeclareSymbolFont{tipa}{T3}{cmr}{m}{n}
\DeclareMathAccent{\inbreve}{\mathalpha}{tipa}{16}

\newcommand{\subsetsim}{\mathrel{%
  \ooalign{\raise0.2ex\hbox{$\subset$}\cr\hidewidth\raise-0.8ex\hbox{\scalebox{0.9}{$\sim$}}\hidewidth\cr}}}
\newcommand{\supsetsim}{\mathrel{%
  \ooalign{\raise0.2ex\hbox{$\supset$}\cr\hidewidth\raise-0.8ex\hbox{\scalebox{0.9}{$\sim$}}\hidewidth\cr}}}

\newcommand{\subsetapprox}{\mathrel{%
  \ooalign{\raise0.4ex\hbox{$\subset$}\cr\hidewidth\raise-0.8ex\hbox{\scalebox{0.9}{$\approx$}}\hidewidth\cr}}}
\allowdisplaybreaks[1]
\def\M{\mathcal{M}}
\def\X{\mathcal{X}}
\def\P{\mathscr{P}}

\def\1{\mathbf{1}}

\def\d{{\text {\rm d}}}

\theoremstyle{definition}
\theoremstyle{definition}
\newtheorem{condition}{Condition}

\newtheorem{theorem}{Theorem}[section]
\newtheorem{lemma}{Lemma}[section]

\providecommand{\remarkname}{Remark}
\providecommand{\theoremname}{Theorem}

\providecommand{\definitionname}{Definition}

\providecommand{\examplename}{Example}

\makeatother

\theoremstyle{plain}
\newtheorem{thm}{\protect\theoremname}
\theoremstyle{definition}
\newtheorem{defn}[thm]{\protect\definitionname}
\theoremstyle{remark}
\newtheorem{rem}[thm]{\protect\remarkname}
\theoremstyle{definition}
\newtheorem{example}[thm]{\protect\examplename}
\providecommand{\definitionname}{Definition}
\providecommand{\examplename}{Example}
\providecommand{\remarkname}{Remark}
\providecommand{\theoremname}{Theorem}

\begin{document}
\title{Sharp Small-Volume Isoperimetry from Log-Sobolev Inequalities}
\author{Lei Yu }
\address{L. Yu is with the School of Statistics and Data Science, LPMC, KLMDASR,
and LEBPS, Nankai University, Tianjin 300071, China (e-mail: leiyu@nankai.edu.cn).}
\begin{abstract}
Let $I_{\inf}(a):=\lim_{n\to\infty}I_{n}(a)=\inf_{n\ge1}I_{n}(a)$
be the asymptotic isoperimetric profile for product of a weighted
Riemannian manifold satisfying $\mathrm{CD}(0,\infty)$ and, more
broadly, for product of a nonsmooth subspace with density that can
be approximated by a sequence of densities satisfying $\mathrm{CD}(0,\infty)$.
We establish the following identity: 
\[
\lim_{a\downarrow0}\frac{I_{\inf}(a)}{a\sqrt{2\ln(1/a)}}=\sqrt{K_{\mathrm{LS}}},
\]
where $K_{\mathrm{LS}}$ is the optimal log-Sobolev constant. The
same argument also yields the large-deviation and moderate-deviation
asymptotics for isoperimetry. 
\end{abstract}

\keywords{Isoperimetric inequality, log-Sobolev inequality, large deviations,
moderate deviations, geometric measure theory }
\maketitle

\section{Introduction}

We consider a smooth manifold $\M$ satisfying the following regular
condition. 

\begin{condition}[Smoothness]\label{cond:smoothness} We assume
that $(\M,g)$ be a smooth, complete, oriented, connected Riemannian
manifold of dimension $k\ge1$, with geodesic distance $d$ induced
by $g$; we adopt $(\M,g)=(\mathbb{R},|\cdot|)$ for $k=1$. We assume
that $\nu=e^{-V}\mathrm{vol}$ is an absolutely continuous Borel probability
measure on $\M$ with $V\in C^{2}(\M)$. \end{condition}

A natural convexity assumption on a smooth manifold with density is
the curvature-dimension condition $\mathrm{CD}(0,\infty)$. 

\begin{condition}[Convexity]\label{cond:convexity} We assume that
$(\M,g,\nu)$ satisfies nonnegative Bakry--Émery--Ricci curvature
(i.e., $\mathrm{CD}(0,\infty)$):
\begin{equation}
\mathrm{Ric}_{V}:=\mathrm{Ric}_{g}+\mathrm{Hess}_{g}V\geq0\;\;\text{ as 2-tensor fields},\label{eq:RicV}
\end{equation}
where $\mathrm{Ric}_{g}$ is the Ricci curvature and $\mathrm{Hess}_{g}V$
is the Hessian of $V$. \end{condition}

We consider the $n$-fold product space $\M^{n}$, equipped with the
product measure $\nu_{n}:=\nu^{\otimes n}$ and the product distance
\[
d_{n}(\mathbf{x},\mathbf{y})=\sqrt{\sum^{n}_{i=1}d(x_{i},y_{i})^{2}},\quad\mathbf{x}=(x_{1},\dots,x_{n}).
\]

For a set $A\subseteq\M^{n}$, its closed $r$-enlargement is defined
as 
\begin{equation}
A^{r}:=\bigcup_{\mathbf{x}\in A}\{\mathbf{y}\in\M^{n}:d_{n}(\mathbf{x},\mathbf{y})\leq r\}.\label{eq:Gamma-1}
\end{equation}
For any Borel set $A\subseteq\M^{n}$, its perimeter is given by 
\begin{equation}
\nu^{+}_{n}(A):=\liminf_{r\downarrow0}\frac{\nu_{n}(A^{r})-\nu_{n}(A)}{r}.\label{eq:perimeter}
\end{equation}
The isoperimetric profile is then the minimal perimeter over all sets
of measure $a\in[0,1]$: 
\begin{equation}
I_{n}(a):=\inf\bigl\{\nu^{+}_{n}(A):A\subseteq\M^{n}\text{ Borel},\ \nu_{n}(A)=a\bigr\}.\label{eq:Gamma}
\end{equation}
We define the asymptotic isoperimetric profile 
\[
I_{\inf}(a):=\lim_{n\to\infty}I_{n}(a)=\inf_{n\ge1}I_{n}(a).
\]

 A related notion is the isoperimetric profile for the infinite dimensional
product space $\M^{\infty}$ equipped with the product topology and
the product measure $\nu_{\infty}:=\nu^{\otimes\infty}$, which is
defined by 
\begin{equation}
I_{\infty}(a):=\inf_{\mathrm{closed}\,A:\nu_{\infty}(A)=a}\nu^{+}_{\infty}(A),\;a\in[0,1].\label{eq:I_infty}
\end{equation}
Here, the infimum is taken over all sets $A\subseteq\M^{\infty}$
that are closed under the product topology and have $\nu_{\infty}$-measure
$a$, and in the definition of $\nu^{+}_{\infty}(A)$, the enlargement
is defined under $d_{\infty}(\mathbf{x},\mathbf{y})=\sqrt{\sum^{\infty}_{i=1}d(x_{i},y_{i})^{2}}$
(although it might diverse to infinity for two arbitrary points).
The author showed the identity of $I_{\inf}$ and $I_{\infty}$ (under
the axiom of countable choice) \cite{yu2025large}.

\begin{defn}
The small-volume isoperimetric constant is defined as 
\[
K^{-}_{\mathrm{IS}}:=\liminf_{a\downarrow0}\frac{I_{\inf}(a)^{2}}{2a^{2}\ln(1/a)}.
\]
\end{defn}

This paper also involves the log-Sobolev inequality and its nonlinear
version. 
\begin{defn}
\label{def:nonlinearLS} For a convex function $\theta:[0,\infty]\to[0,\infty]$
(satisfying $\theta(0)=0$), we say a probability measure $\nu$ admits
the \emph{$\theta$-nonlinear log-Sobolev inequality} (shortly, \emph{$\theta$-nonlinear
LSI}) if 
\begin{equation}
\theta(D(\mu\|\nu))\leq I(\mu\|\nu),\;\forall\mu\in\P^{\mathrm{ac}}(\M),\label{eq:-74}
\end{equation}
where for $\mu=\rho\nu$, the relative entropy is
\begin{equation}
D(\mu\|\nu):=\int\rho\ln\rho\,\d\nu\label{eq:entropy-1}
\end{equation}
and the Fisher information is 
\begin{equation}
I(\mu\|\nu):=\begin{cases}
4\int|\nabla\sqrt{\rho}|^{2}_{*}\d\nu=\int_{\{\rho>0\}}\frac{|\nabla\rho|^{2}_{*}}{\rho}\d\nu, & \text{if }\sqrt{\rho}\in W^{1,2}(\M,d,\nu)\\
+\infty, & \text{otherwise}
\end{cases}\label{eq:Fisher}
\end{equation}
with $|\nabla f|_{*}$ denoting the minimal relaxed gradient of
a function $f$ \cite{ambrosio2014calculus,gigli2013log}. Define
the \emph{log-Sobolev profile} as 
\[
\Theta(\alpha):=\inf_{\mu\in\P^{\mathrm{ac}}(\M):D(\mu\|\nu)=\alpha}I(\mu\|\nu),
\]
and its lower convex envelope, identified with the optimal convex
function $\theta$ for the nonlinear LSI in \eqref{eq:-74}, can be
written as 
\begin{align}
\breve{\Theta}(\alpha) & =\inf_{\substack{\lambda\in[0,1],\mu_{1},\mu_{2}\in\P^{\mathrm{ac}}(\M):\\
\lambda D(\mu_{1}\|\nu)+(1-\lambda)D(\mu_{2}\|\nu)=\alpha
}
}\lambda I(\mu_{1}\|\nu)+(1-\lambda)I(\mu_{2}\|\nu).\label{eq:Theta_LCE}
\end{align}
\end{defn}

The definition of nonlinear LSI is still valid for Polish metric probability
measure spaces. We may restrict the probability measures $\mu$ in
the nonlinear LSI to some more regular classes. 
\begin{itemize}
\item Given any Borel probability measure $\nu$ on a Polish metric space
$(\X,d)$, when $\theta$ is continuous, we may assume the probability
measures $\mu$ admit Lipschitz densities  $\rho=\d\mu/\d\nu:\X\to[0,\infty)$,
and for this case, $|\nabla\rho|_{*}$ can be replaced by the local
Lipschitz constant $|\nabla\rho|$ \cite{gigli2013log}. 
\item If $\nu$ satisfies the smoothness assumption in Condition \ref{cond:smoothness},
 by approximation arguments, we may assume that the probability measures
$\mu$ have smooth density $\rho:=\d\mu/\d\nu$, in which case, $|\nabla\rho|_{*}=|\nabla\rho|$
is the norm of the gradient $\nabla\rho$. 
\end{itemize}
For a convex $\theta$, a $\theta$-nonlinear LSI holds only if $\theta(0)=0$
and $\theta$ is non-decreasing, and thus, $\breve{\Theta}(0)=0$
and $\breve{\Theta}$ is non-decreasing and convex. In particular,
when $\theta(t)=2Kt$ for some constant $K>0$, then the $\theta$-nonlinear
LSI reduces to the standard LSI. Let $K_{\mathrm{LS}}$ be the optimal
log-Sobolev constant. 

The author \cite{yu2025large} showed that $K^{-}_{\mathrm{IS}}\le K_{\mathrm{LS}}$
and asked whether $K^{-}_{\mathrm{IS}}=K_{\mathrm{LS}}.$ We answer
this question in the present paper. 

We first establish a bridge between isoperimetric and log-Sobolev
inequalities in finite-dimensional spaces. 

\begin{theorem} \label{thm:finite-dimensional} Let $(\M,d,\nu)$
be a weighted Riemannian manifold satisfying Conditions \ref{cond:smoothness}
and \ref{cond:convexity}. Then, it holds that for every $\delta>0$,
every $n\ge1$, and every $a\in(0,1)$, 
\begin{equation}
I_{n}(a)\ge\frac{a}{1+2\delta}\sqrt{n\breve{\Theta}\left(\frac{1}{n}(\ln\frac{1}{a}-C_{\delta})_{+}\right)}.\label{eq:theta}
\end{equation}
In particular, 
\begin{equation}
I_{n}(a)\ge\frac{a}{1+2\delta}\sqrt{2K_{\mathrm{LS}}\left(\ln\frac{1}{a}-C_{\delta}\right)_{+}}.\label{eq:linear}
\end{equation}
\end{theorem}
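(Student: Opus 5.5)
Given a Borel set $A\subseteq\M^{n}$ with $\nu_{n}(A)=a$, I would build from $A$ a smooth test density $\rho$ on $\M^{n}$, feed it to the nonlinear LSI for $\nu_{n}$, and compare with the perimeter of $A$.

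\emph{Step 1 (tensorization).} I first show that $\nu_{n}$ satisfies the $\theta_{n}$-nonlinear LSI with $\theta_{n}(\alpha)=n\breve{\Theta}(\alpha/n)$. Fix $\mu\ll\nu_{n}$ and use the chain rule for relative entropy, $D(\mu\|\nu_{n})=\sum_{i}\mathbb{E}\,D(\mu_{i|<i}\|\nu)$, where $\mu_{i|<i}$ is the conditional law of $x_{i}$ given $x_{<i}$. The standard Fisher-information subadditivity gives $I(\mu\|\nu_{n})\ge\sum_{i}\mathbb{E}\,I(\mu_{i|<i}\|\nu)$. Jensen's inequality for the convex function $\breve{\Theta}$ then yields
\[
I(\mu\|\nu_{n})\ge\sum_{i}\mathbb{E}\,\breve{\Theta}(D(\mu_{i|<i}\|\nu))\ge n\breve{\Theta}\bigl(D(\mu\|\nu_{n})/n\bigr).
\]

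\emph{Step 2 (from a set to a density).} Take $\rho=\varphi(d_{n}(\cdot,A)/\varepsilon)/Z$ with a cutoff $\varphi$ that equals $1$ at $0$ and decreases to $0$ at $1$, so that $\mu=\rho\nu_{n}$ is spread over the shell $A^{\varepsilon}\setminus A$ with $|\nabla d_{n}(\cdot,A)|\le1$. The difficulty is that $I(\mu\|\nu_{n})=\int|\nabla\rho|^{2}/\rho$ is quadratic in the boundary measure, whereas the target bound is linear in $I_{n}(a)$. Under $\mathrm{CD}(0,\infty)$ this is overcome by concavity: the Bakry--Ledoux results, via the Lévy--Gromov/Bobkov-type isoperimetric comparison, give that $r\mapsto\nu_{n}(A^{r})$ behaves well and that $I_{n}$ is concave. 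One may therefore reduce to isoperimetric minimizers, or to sets for which $\nu_{n}(A^{r})\le a+rI_{n}(a)(1+\delta)$ for all $r\le\varepsilon$, with $\varepsilon$ chosen depending on $\delta$. With a linear cutoff, $Z\approx a+O(\varepsilon I_{n})$ and $\int|\nabla\rho|^{2}/\rho\approx\nu^{+}_{n}(A)^{2}/(\cdots)$ after optimizing $\varphi$. A cleaner alternative is $\rho\propto\mathbf{1}_{A}$ smoothed by the heat semigroup $P_{t}$ for $\nu_{n}$. In that version:
\begin{itemize}
\item[(a)] Entropy: $D\ge\ln(1/a)-C_{\delta}$, since $\mu$ mostly lives near $A$, giving mass $\ge a$ up to constants.
\item[(b)] Fisher information: $I(P_{t}\mathbf{1}_{A}/a)$ is controlled by Bakry--Ledoux's reverse Poincaré/gradient bound $|\nabla P_{t}f|\le\|f\|_{\infty}/\sqrt{2t}$ under $\mathrm{CD}(0,\infty)$, combined with $\int|\nabla P_{t}\mathbf{1}_{A}|\le\nu^{+}_{n}(A)$.
\end{itemize}
The dimension-free character of both bounds is what makes $C_{\delta}$ independent of $n$.

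\emph{Step 3 (the key estimate).} I would prove $I(\mu\|\nu_{n})\le\bigl((1+2\delta)\nu_{n}^{+}(A)/a\bigr)^{2}$ and $D(\mu\|\nu_{n})\ge\ln(1/a)-C_{\delta}$. For the first, using $\rho=(P_{t}\mathbf{1}_{A})^{p}$ or a level-set version, and the heat flow, the Fisher information is an integral of $|\nabla\rho|^{2}/\rho$ over level sets. By the coarea formula and the concavity of the profile, this is essentially $I_{n}(a)^{2}/a$ times $1/a$ after normalization. Combining with Step 1 gives $n\breve{\Theta}\bigl((\ln(1/a)-C_{\delta})_{+}/n\bigr)\le(1+2\delta)^{2}I_{n}(a)^{2}/a^{2}$, which is \eqref{eq:theta}. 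Then \eqref{eq:linear} follows from $\breve{\Theta}(\alpha)\ge2K_{\mathrm{LS}}\alpha$. I expect Step 3, obtaining a Fisher bound \emph{linear} in the perimeter with only a $(1+2\delta)$ loss and an $n$-free additive entropy loss $C_{\delta}$, to be the main obstacle. My first attempt would be the level-set construction $\rho\propto\psi(u)$ with $u$ the distance function, optimized via the concave isoperimetric ODE $I_{n}$ bounding $\frac{d}{dr}\nu_{n}(A^{r})\ge I_{n}(\nu_{n}(A^{r}))$, and choosing $\psi$ exponential in $r$ at rate $\approx I_{n}(a)/a$ over a window where $I_{n}(v)/v$ stays within factor $1+\delta$.
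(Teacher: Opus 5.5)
Your Step~1 (tensorization via the chain rule and Fisher-information subadditivity) and your passage to \eqref{eq:linear} via $\breve{\Theta}(\alpha)\ge 2K_{\mathrm{LS}}\alpha$ are fine. The gap is Step~3: you flag it as the main obstacle but never carry it out, and the inequality you propose to drive it points the wrong way. The bound $\frac{d}{dr}\nu_n(A^r)\ge I_n(\nu_n(A^r))$ controls the growth of the enlargements from \emph{below}. The entropy estimate needs an \emph{upper} bound valid for \emph{all} $r\ge0$, namely $\nu_n(A^r)\le a e^{hr}$ with $h=I_n(a)/a$.

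A linear bound on a short window $r\le\varepsilon$ is not enough. For the tilt $e^{-shR}$ one has $\int e^{-shR}\,\d\nu_n\ge e^{-shr}\nu_n(A^r)$ for every $r$. So the normalization is comparable to $a$ only if $\nu_n(A^r)$ is controlled up to $r$ of order $h^{-1}\ln\frac1a$, i.e.\ until the enlargement has macroscopic volume. Such a bound fails for general sets of near-minimal perimeter. Adjoin to $A$ a finite $\eta$-net of a compact set of almost full measure: this changes neither $\nu_n(A)$ nor (when $nk\ge2$) $\nu^+_n(A)$, yet makes $\nu_n(A^{\eta})$ close to $1$. You mention reducing to isoperimetric minimizers, but what makes them useful is exactly the fact you are missing, Theorem~12 of \cite{yu2025large}. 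Under $\mathrm{CD}(0,\infty)$, a minimizer of volume $a$ exists and satisfies $\frac1r\ln\frac{\nu_n(A^r)}{a}\le\frac{I_n(a)}{a}$ for all $r>0$. This is the only place the curvature condition is used.

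With that bound, no coarea or level-set optimization is needed. Take the exponential tilt on the whole space, $\d Q/\d\nu_n=e^{-shR}/Z(s)$, with $R=d_n(\cdot,A)$, $s=1+2\delta$ and $Z(u)=\int e^{-uhR}\,\d\nu_n$. Since $|\nabla\ln(\d Q/\d\nu_n)|=sh|\nabla R|\le sh$ pointwise, $I(Q\|\nu_n)\le s^2h^2$ is immediate. All the work is in the entropy:
\begin{itemize}
\item layer-cake plus the growth bound give $a\le Z(u)\le a\,\frac{u}{u-1}$ for $u>1$;
\item Jensen applied to $\mathbb{E}_Q e^{(s-t)hR}=Z(t)/Z(s)$ with $t=1+\delta$ turns $D(Q\|\nu_n)=-sh\,\mathbb{E}_QR-\ln Z(s)$ into $D(Q\|\nu_n)\ge\ln\frac1a-C_\delta$, with $C_\delta$ independent of $n$ and $a$.
\end{itemize}

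Your other candidates do not substitute for this. A compactly supported cutoff $\varphi$ makes $|\nabla\ln\rho|$ unbounded at the edge of its support, so you lose the pointwise Fisher bound. In the heat-flow version, $\|\nabla P_tf\|_\infty\le\|f\|_\infty/\sqrt{2t}$ and $\int|\nabla P_t\mathbf{1}_A|\,\d\nu_n\le\nu^+_n(A)$ do not control $\int|\nabla\rho|^2/\rho\,\d\nu_n$ where $P_t\mathbf{1}_A$ is small. Your entropy claim (a) also presupposes that the region where $\mu$ lives has $\nu_n$-measure $O(a)$, which is again the missing growth bound.
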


By using this theorem and Theorem 1 of \cite{yu2025large}, we prove
$K^{-}_{\mathrm{IS}}=K_{\mathrm{LS}}.$

\begin{theorem}[Small-Volume Isoperimetry] \label{thm:main} For
a weighted Riemannian manifold $(\M,d,\nu)$ satisfying Conditions
\ref{cond:smoothness} and \ref{cond:convexity}, it holds that 
\[
K^{-}_{\mathrm{IS}}=K_{\mathrm{LS}}.
\]
More precisely, 
\begin{equation}
\lim_{a\downarrow0}\frac{I_{\inf}(a)}{a\sqrt{2\ln(1/a)}}=\sqrt{K_{\mathrm{LS}}}.\label{eq:CLT}
\end{equation}
\end{theorem}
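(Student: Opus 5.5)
The proof has two halves: a lower bound coming from Theorem \ref{thm:finite-dimensional}, and a matching upper bound, which is the inequality $K^{-}_{\mathrm{IS}}\le K_{\mathrm{LS}}$ of \cite{yu2025large}. For the upper bound I will check that the argument of \cite{yu2025large} (Theorem 1 there) gives the $\limsup$ and not only a $\liminf$:
\[
\limsup_{a\downarrow0}\frac{I_{\inf}(a)}{a\sqrt{2\ln(1/a)}}\le\sqrt{K_{\mathrm{LS}}}.
\]
That argument builds near-optimal sets in $\M^{n}$ from a near-extremal density for the log-Sobolev quotient. The natural candidates are tensorized level sets, i.e. sets on which $\sum_i\ln\rho(x_i)$ lies above a threshold, with a moderate-deviation choice of $n$ and threshold as functions of $a$. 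Because every small $a$ can be reached this way, the bound holds along all $a\downarrow0$. If Theorem 1 of \cite{yu2025large} is stated only in its $\liminf$ form, I will rerun its construction for an arbitrary sequence $a\downarrow0$; I expect this to need only routine bookkeeping.

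For the lower bound, fix $\delta>0$ and use \eqref{eq:linear}. It holds for every $n$, so taking the infimum over $n$ gives
\[
I_{\inf}(a)\ge\frac{a}{1+2\delta}\sqrt{2K_{\mathrm{LS}}\bigl(\ln\tfrac1a-C_{\delta}\bigr)_{+}}.
\]
The constant $C_\delta$ is fixed, so $(\ln\frac1a-C_\delta)_+/\ln\frac1a\to1$ as $a\downarrow0$. Dividing by $a\sqrt{2\ln(1/a)}$ then yields
\[
\liminf_{a\downarrow0}\frac{I_{\inf}(a)}{a\sqrt{2\ln(1/a)}}\ge\frac{\sqrt{K_{\mathrm{LS}}}}{1+2\delta}.
\]
Letting $\delta\downarrow0$ gives $\liminf\ge\sqrt{K_{\mathrm{LS}}}$.

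Combining the two bounds, the limit in \eqref{eq:CLT} exists and equals $\sqrt{K_{\mathrm{LS}}}$. Squaring gives $K^{-}_{\mathrm{IS}}=K_{\mathrm{LS}}$. The degenerate case $K_{\mathrm{LS}}=0$, where $\nu$ has no LSI, is covered by the upper bound alone, since the lower bound is then trivial.

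The substantive content sits in Theorem \ref{thm:finite-dimensional}, which we may assume. The only delicate point in this proof is making sure the upper bound from \cite{yu2025large} controls the $\limsup$, so that a genuine limit exists rather than just an equality of $\liminf$'s.
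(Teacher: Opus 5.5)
Your lower bound is the paper's argument exactly: infimum over $n$ in \eqref{eq:linear}, then $a\downarrow0$, then $\delta\downarrow0$. The gap is the upper bound. It is the only step that needs more than Theorem \ref{thm:finite-dimensional} plus a citation, and you leave it as a promissory note.

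You correctly observe that $K^{-}_{\mathrm{IS}}\le K_{\mathrm{LS}}$ from \cite{yu2025large} only controls a $\liminf$. Your remedy is to rerun a moderate-deviation construction with level sets of $\sum_i\ln\rho(x_i)$, tuning $n$ and the threshold to each small $a$. That is not routine bookkeeping. It needs volume and perimeter asymptotics sharp up to a factor $1+o(1)$ in a regime where both $n$ and $\rho$ move with $a$. Your assertion that ``every small $a$ can be reached this way'' is exactly what must be proved. Also, Theorem 1 of \cite{yu2025large} is a fixed-rate bound, $\limsup_{m\to\infty}I_m(e^{-m\alpha})/(e^{-m\alpha}\sqrt{m})\le\sqrt{\breve{\Theta}(\alpha)}$, tied to volume $e^{-m\alpha}$ in dimension $m$. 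The issue is not whether it is phrased with a $\liminf$ or a $\limsup$, but how to get from it to $I_{\inf}(a)$ for arbitrary $a$.

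The missing idea is a monotone interpolation, which needs no new construction. Given $a$, let $s=\ln(1/a)$, $m=\lceil s/\alpha\rceil$ and $b=e^{-m\alpha}\le a$. Since $I_{\inf}\le I_m$, and $I_m$ is concave with $I_m(0)=0$ (so $u\mapsto I_m(u)/u$ is nonincreasing), you get $I_{\inf}(a)/a\le I_m(a)/a\le I_m(b)/b$, i.e.
\[
\frac{I_{\inf}(a)}{a\sqrt{2s}}\le\frac{I_m(e^{-m\alpha})}{e^{-m\alpha}\sqrt{m}}\sqrt{\frac{m}{2s}}.
\]
Since $m/s\to1/\alpha$, this yields $\limsup_{a\downarrow0}\frac{I_{\inf}(a)}{a\sqrt{2\ln(1/a)}}\le\sqrt{\breve{\Theta}(\alpha)/(2\alpha)}$.

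Then let $\alpha\downarrow0$. Because $\breve{\Theta}$ is convex with $\breve{\Theta}(0)=0$, the ratio $\breve{\Theta}(\alpha)/\alpha$ decreases to its infimum as $\alpha\downarrow0$. That infimum equals $2K_{\mathrm{LS}}$: it is at least $2K_{\mathrm{LS}}$ because $2K_{\mathrm{LS}}\alpha$ is a convex minorant of $\Theta$, and at most $2K_{\mathrm{LS}}$ because $\breve{\Theta}\le\Theta$. With this upper bound in place, your combination step goes through as written.
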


Since $I_{\inf}=I_{\infty}$ (under the axiom of countable choice)
\cite{yu2025large}, $I_{\inf}$ is exactly the isoperimetric profile
for the infinite dimensional product space $\M^{\infty}$. The equivalence
in \eqref{eq:CLT} characterizes the asymptotic behaviour of this
isoperimetric profile as $a\downarrow0$ (or $a\uparrow1$).

The equivalence in \eqref{eq:CLT} implies Ledoux and Bobkov's weak
equivalence \cite{ledoux1994semigroup,ledoux1994simple,bobkov1999isoperimetric}.
That is, a Gaussian-type isoperimetric inequality $I_{1}(a)\ge\sqrt{K}I_{\mathrm{G}}(a),\forall a$
with $I_{\mathrm{G}}$ denoting Gaussian isoperimetric profile holds
for some $K>0$, if and only if the standard LSI holds with some constant
$K'>0$. On one hand, by the facts that $I_{1}\ge I_{\inf}$, if the
standard LSI holds with some constant $K'>0$, then our equivalence
implies $\liminf_{a\to0}\frac{I_{1}(a)}{I_{\mathrm{G}}(a)}\ge\liminf_{a\to0}\frac{I_{\inf}(a)}{I_{\mathrm{G}}(a)}=\sqrt{K'}$.
That is, $I_{1}(a)\ge\sqrt{K}I_{\mathrm{G}}(a),\forall a$ for some
$K>0$, yielding a Gaussian-type isoperimetric inequality for the
space $\M$. On the other hand, if a Gaussian-type isoperimetric inequality
for $\M$ holds with constant $K>0$, then by the well known fact
that $I_{1}(a)\ge\sqrt{K}I_{\mathrm{G}}(a),\forall a\Longleftrightarrow I_{n}(a)\ge\sqrt{K}I_{\mathrm{G}}(a),\forall a,n$,
our equivalence implies the LSI with the same constant $K$.

Our second result is establishing the moderate-deviation principle
for isoperimetry. 

\begin{theorem}[Moderate-Deviation Principle for Isoperimetry] \label{thm:MDP}
Let $(\M,d,\nu)$ be a weighted Riemannian manifold satisfying Conditions
\ref{cond:smoothness} and \ref{cond:convexity}. Let $a_{n}=e^{-s_{n}},$
where $s_{n}\to\infty,\,s_{n}/n\to0.$ Then,
\begin{equation}
\lim_{n\to\infty}\frac{I_{n}(a_{n})}{a_{n}\sqrt{2s_{n}}}=\sqrt{K_{\mathrm{LS}}}.\label{eq:MDP}
\end{equation}
\end{theorem}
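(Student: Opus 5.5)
The argument splits into a lower bound and a matching upper bound for $\frac{I_n(a_n)}{a_n\sqrt{2s_n}}$.

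\textbf{Lower bound.} We apply \eqref{eq:linear} of Theorem \ref{thm:finite-dimensional} with $a=a_n$, so that $\ln(1/a_n)=s_n$:
\[
\frac{I_n(a_n)}{a_n\sqrt{2s_n}}\ge\frac{1}{1+2\delta}\sqrt{K_{\mathrm{LS}}\Bigl(1-\frac{C_\delta}{s_n}\Bigr)_{+}}.
\]
Since $s_n\to\infty$, the $\liminf$ of the left-hand side is at least $\sqrt{K_{\mathrm{LS}}}/(1+2\delta)$. Letting $\delta\downarrow0$ gives $\liminf\ge\sqrt{K_{\mathrm{LS}}}$. This direction does not use $s_n/n\to0$, and holds even when $K_{\mathrm{LS}}=0$ or $K_{\mathrm{LS}}=\infty$ with the obvious conventions.

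\textbf{Upper bound.} Here we need test sets. Fix $\varepsilon>0$ and choose a smooth, bounded, compactly supported $f$ with $\int f\,\d\nu=0$ and
\[
\int|\nabla f|^2\,\d\nu\le(K_{\mathrm{LS}}+\varepsilon)\,\mathrm{Var}_\nu(f)\cdot\text{(appropriate factor)},
\]
approximately attaining the optimal constant. Note that $K_{\mathrm{LS}}$ is characterized through entropy, not variance, so the natural route is the linearization $\rho=(1+tg)^2$ near $\nu$ for a bounded $g$. Its moderate-deviation exponent is governed by the quadratic ratio $\int|\nabla g|^2/\mathrm{Var}(g)$, which lower-bounds the LSI ratio only through linearization. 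This mismatch is the main obstacle. Moderate deviations see only the second-order (Poincaré-type) behaviour of a single additive statistic, while $K_{\mathrm{LS}}$ can be strictly smaller than the Poincaré constant.

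I would therefore use half-space-type sets for a cleverer statistic. Take a tilted measure $\mu$ with density $\rho=e^{h}$ nearly optimal for the ratio $I(\mu\|\nu)/(2D(\mu\|\nu))$, and take the set
\[
A_n=\Bigl\{\mathbf x:\sum_{i\le m}h(x_i)\ge c\Bigr\}
\]
using only $m=m_n\approx s_n/D(\mu\|\nu)$ coordinates, where $m_n\le n$ because $s_n/n\to0$. On the $m$ coordinates this is a large-deviation event, and by Cramér/Sanov with exact (Bahadur–Rao type) prefactors:
\begin{itemize}
\item its volume is $e^{-mD(\mu\|\nu)(1+o(1))}$;
\item its boundary measure is about $\nu_n(A_n)\cdot\sqrt{m\int|\nabla h|^2\,\d\mu}$, since the normal speed is $|\nabla\sum h|$, which concentrates under the conditioned law near $\mu^{\otimes m}$.
\end{itemize}
The resulting ratio is
\[
\frac{\sqrt{m\,I(\mu\|\nu)}}{\sqrt{2mD(\mu\|\nu)}}\approx\sqrt{\frac{I}{2D}},
\]
which can be made close to $\sqrt{K_{\mathrm{LS}}}$ by choosing $\mu$ well.

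The optimal ratio for the standard LSI may be approached only as $D\to0$ or $D\to\infty$. In either case we pick $\mu$ first (fixed, with $D$ fixed) and let $m\to\infty$, so this is harmless. To match the prescribed measure $a_n$ exactly, adjust $c$ by continuity and absorb the $e^{o(s_n)}$ prefactor errors, which are negligible after taking logarithms and square roots. Alternatively, and more economically, I would invoke the large-deviation result of \cite{yu2025large} (Theorem 1 there), which already gives $\lim_n I_n(e^{-n\alpha})/(e^{-n\alpha}\sqrt n)=\sqrt{\breve\Theta(\alpha)}$-type upper bounds. Applying it in dimension $m_n$, with $\alpha=D(\mu\|\nu)$ fixed and using $I_n\le I_{m}$ for $m\le n$ (extend sets cylindrically), yields
\[
\limsup\frac{I_n(a_n)}{a_n\sqrt{2s_n}}\le\sqrt{\frac{\breve\Theta(\alpha)}{2\alpha}}+\varepsilon.
\]
The infimum over $\alpha$ of $\breve\Theta(\alpha)/(2\alpha)$ equals $K_{\mathrm{LS}}$, completing the matching bound. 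The delicate point is the non-integrality of $s_n/\alpha$ and the monotonicity in $a$ needed to interpolate between $e^{-m\alpha}$ and $a_n$. I would handle this by varying $\alpha$ slightly, so that $m\alpha=s_n$ exactly, using continuity of $\breve\Theta$ and the uniformity of the estimates from \cite{yu2025large} in $\alpha$ over compacts.
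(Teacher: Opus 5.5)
Your lower bound is the paper's argument. Your ``economical'' upper bound also follows the paper's route: cylindrical extension $I_n\le I_{m_n}$ with $m_n=\lceil s_n/\alpha\rceil\le n$, then the fixed-$\alpha$ large-deviation upper bound from Theorem~1 of \cite{yu2025large}, then $\alpha\downarrow0$ using $\inf_{\alpha>0}\breve{\Theta}(\alpha)/(2\alpha)=K_{\mathrm{LS}}$. The gap is at the step you flag yourself, namely reconciling $a_n=e^{-s_n}$ with $e^{-m_n\alpha}$. You propose replacing $\alpha$ by $\alpha_n:=s_n/m_n$ so that $m_n\alpha_n=s_n$ exactly, and appealing to uniformity over compact $\alpha$-sets plus continuity of $\breve{\Theta}$. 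What you cite does not support this. Theorem~1 of \cite{yu2025large} controls $I_m(e^{-m\alpha})/(e^{-m\alpha}\sqrt m)$ only for each fixed $\alpha$. Continuity of the limit function $\breve{\Theta}$ says nothing about convergence along a moving sequence $\alpha_n\to\alpha$. Without some monotonicity of $I_m$ in the volume variable, a pointwise-in-$\alpha$ $\limsup$ cannot control $I_{m_n}(e^{-m_n\alpha_n})$. As written, your upper bound therefore rests on an unproved uniform version of the cited theorem.

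The missing ingredient is elementary. Under $\mathrm{CD}(0,\infty)$ the profile $I_m$ of $(\M^m,\nu^{\otimes m})$ is concave on $[0,1]$ with $I_m(0)=0$, so $u\mapsto I_m(u)/u$ is nonincreasing on $(0,1)$. Rounding $m_n$ up gives $b_n:=e^{-m_n\alpha}\le a_n$, which is the favourable direction, so directly
\[
\frac{I_n(a_n)}{a_n\sqrt{2s_n}}\le\frac{I_{m_n}(a_n)}{a_n\sqrt{2s_n}}\le\frac{I_{m_n}(b_n)}{b_n\sqrt{m_n}}\sqrt{\frac{m_n}{2s_n}}.
\]
Since $m_n\to\infty$ and $m_n/s_n\to1/\alpha$, the fixed-$\alpha$ bound gives $\limsup\le\sqrt{\breve{\Theta}(\alpha)/(2\alpha)}$. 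No variation of $\alpha$ and no continuity of $\breve{\Theta}$ are needed; this is exactly how the paper closes the argument. Your half-space construction $\{\sum_{i\le m}h(x_i)\ge c\}$ is a sound heuristic for why the answer is $\sqrt{I/(2D)}$. It cannot replace the citation, though: the ratio $I_n(a_n)/a_n$ must be controlled to within a factor $1+o(1)$, not merely on a logarithmic scale. Your Bahadur--Rao-level claims about that set's perimeter-to-volume ratio are asserted rather than proved.
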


The same argument is valid for deriving the large-deviation principle
for isoperimetry. 

\begin{theorem}[Large-Deviation Principle for Isoperimetry] \label{thm:LDP}
Let $(\M,d,\nu)$ be a weighted Riemannian manifold satisfying Conditions
\ref{cond:smoothness} and \ref{cond:convexity}. Given $\alpha\ge0$,
let $a_{n}=e^{-n\alpha}$. Then,
\begin{equation}
\lim_{n\to\infty}\frac{I_{n}(a_{n})}{a_{n}\sqrt{n}}=\sqrt{\breve{\Theta}(\alpha)}.\label{eq:LDP}
\end{equation}
\end{theorem}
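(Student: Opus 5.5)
We prove the two inequalities in \eqref{eq:LDP} separately. The lower bound comes from Theorem \ref{thm:finite-dimensional}. The upper bound comes from an explicit construction, presumably the one behind Theorem 1 of \cite{yu2025large}.

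\emph{Lower bound.} Fix $\delta>0$ and apply \eqref{eq:theta} with $a=a_{n}=e^{-n\alpha}$. This gives
\[
\frac{I_{n}(a_{n})}{a_{n}\sqrt{n}}\ge\frac{1}{1+2\delta}\sqrt{\breve{\Theta}\Bigl(\bigl(\alpha-\tfrac{C_{\delta}}{n}\bigr)_{+}\Bigr)}.
\]
As $n\to\infty$ the argument tends to $\alpha$. On $(0,\infty)$ the function $\breve{\Theta}$ is convex and finite, hence continuous there (if it is infinite we simply restrict to its domain). It is also non-decreasing. Therefore the right-hand side tends to $\frac{1}{1+2\delta}\sqrt{\breve{\Theta}(\alpha)}$ for $\alpha>0$; at $\alpha=0$ it is trivially $\ge0$. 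Letting $\delta\downarrow0$ gives $\liminf_{n}\frac{I_{n}(a_{n})}{a_{n}\sqrt{n}}\ge\sqrt{\breve{\Theta}(\alpha)}$. The only point needing care is the left-continuity of $\breve{\Theta}$ at $\alpha$, which follows from monotonicity plus convexity on the interior of the domain.

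\emph{Upper bound.} Here we build sets from near-optimizers of \eqref{eq:Theta_LCE}.

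1. Fix $\lambda\in[0,1]$ and smooth, bounded, positive densities $\rho_{1},\rho_{2}$ with $\lambda D_{1}+(1-\lambda)D_{2}=\alpha$, where $D_{j}=D(\mu_{j}\|\nu)$, and with $\lambda I_{1}+(1-\lambda)I_{2}\le\breve{\Theta}(\alpha)+\epsilon$. By approximation we may take $\rho_{j}$ to be Lipschitz, and we may perturb slightly so that the entropy constraint holds with a small margin.

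2. Set $m=\lfloor\lambda n\rfloor$. Consider the tilted product measure $\mu^{(n)}=\mu_{1}^{\otimes m}\otimes\mu_{2}^{\otimes(n-m)}$ and the log-likelihood function
\[
L(\mathbf{x})=\sum_{i\le m}\ln\rho_{1}(x_{i})+\sum_{i>m}\ln\rho_{2}(x_{i}).
\]

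3. Take $A=\{L\ge t\}$, with $t$ chosen so that $\nu_{n}(A)=a_{n}$. By standard Cram\'er/Chernoff asymptotics under the change of measure, $t=n\alpha+O(\sqrt{n})$ up to the sub-exponential corrections needed to get the measure exactly right.

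4. By the coarea inequality, $\nu_{n}^{+}(\{L\ge t\})$ is bounded by the density of $L$ at $t$ times $\mathbb{E}[|\nabla L|\mid L=t]$. Under $\nu_{n}$, near the level $t$, the law of $\mathbf{x}$ conditioned on $L\approx t$ is close to $\mu^{(n)}$.

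5. The distribution function of $L$ under $\nu_{n}$ is $e^{-t}$ times the corresponding quantity under $\mu^{(n)}$. This gives $\nu_{n}^{+}(A)\approx a_{n}\cdot\mathbb{E}_{\mu^{(n)}}|\nabla L|$ modulo a $1+o(1)$ factor, because the density/tail ratio at a level $t$ sitting at the center of the tilt is $\asymp$ the tilt parameter $=1$.

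6. Finally, $|\nabla L|^{2}=\sum_{i}|\nabla\ln\rho(x_{i})|^{2}$ concentrates around $m I_{1}+(n-m)I_{2}$ by the law of large numbers. Hence $\nu_{n}^{+}(A)\le a_{n}\sqrt{n(\breve{\Theta}(\alpha)+2\epsilon)}$ for large $n$.

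\emph{Main obstacle.} The hardest step is making steps 4--5 rigorous. Choosing $t$ exactly at the mean under $\mu^{(n)}$ gives $\nu_{n}(A)\approx e^{-n\alpha}\cdot O(1/\sqrt{n})$ rather than exactly $e^{-n\alpha}$. The ratio of perimeter to volume then picks up a density-to-tail ratio that is $1+o(1)$ only after an $O(\sqrt n)$ shift of $t$; this needs a local CLT, or a smoothing argument via a small Gaussian perturbation, for the lattice-free variable $L$. I would first try to sidestep it by using the monotonicity $I_{n}(a)$ vs.\ $I_{n}(a')$ for $a'\sim a$ up to sub-exponential factors. This is valid because $I_{n}(a)/a$ varies slowly, via concavity-type properties of $I_{n}$ under $\mathrm{CD}(0,\infty)$. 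With it, sub-exponential mismatches in volume only affect the quantity $a_{n}\sqrt{n}$ by a $1+o(1)$ factor. Letting $\epsilon\downarrow0$ then matches the lower bound and proves \eqref{eq:LDP}.
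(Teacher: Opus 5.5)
Your lower bound is exactly the paper's argument: apply \eqref{eq:theta} at $a=a_n$, use continuity of $\breve{\Theta}$, then let $\delta\downarrow0$. Here $\breve{\Theta}$ is finite on all of $[0,\infty)$, because smooth compactly supported densities realize every entropy value with finite Fisher information, so your caveat about its domain is unnecessary. For the upper bound the paper constructs nothing. It cites the achievability bound $\limsup_n I_n(e^{-n\alpha})/(e^{-n\alpha}\sqrt n)\le\sqrt{\breve{\Theta}(\alpha)}$ from \cite{yu2025large}, the same estimate recorded as \eqref{eq:fixed-rate}. Citing it would complete your proof.

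Your self-contained construction, however, has a genuine gap in steps 4--6, and the proposed sidestep does not close it.
\begin{itemize}
\item Monotonicity of $u\mapsto I_n(u)/u$ is only one-sided: it transfers a bound from a set of volume $v\le a_n$ to volume $a_n$. That is enough here. At $t=\mathbb{E}_{\mu^{(n)}}L=n\alpha+O(1)$ the volume is of order $e^{-n\alpha}/\sqrt n<a_n$, so no shift of $t$ is needed.
\item Contrary to your remark, the density-to-tail ratio is already $1+o(1)$ at the mean. It stays so throughout the $O(\sqrt n)$ window around the mean.
\end{itemize}
What monotonicity cannot supply are the two estimates that actually control $\nu_n^+(A)/\nu_n(A)$.
\begin{itemize}
\item First, the limit $e^{-t}p(t)/\int_t^\infty e^{-s}p(s)\,\d s\to1$, where $p$ is the density of $L$ under $\mu^{(n)}$, requires the law of $L$ to be locally flat at scale $O(1)$. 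That is a local limit theorem (Stone's, for non-lattice summands), not a consequence of Cram\'er/Chernoff asymptotics.
\item Second, step 6 gives only unconditional concentration of $|\nabla L|^2/n$. You need it on the level set, or on a slab around it, which has $\mu^{(n)}$-probability only $O(1/\sqrt n)$. An exceptional set of probability $o(1)$ from the law of large numbers says nothing about such a slab.
\item Finally, the coarea formula identifies the BV perimeter for a.e.\ level. It does not give the Minkowski-type $\nu_n^+$ at a prescribed level.
\end{itemize}

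One repair is as follows.
\begin{itemize}
\item Arrange by approximation that $\ln\rho_j$ is bounded, $|\nabla\ln\rho_j|\le M$, and $\ln\rho_j(X)$ is non-lattice under $\mu_j$.
\item The mean-value inequality along geodesics, together with Fatou, gives $\int_{t_0}^{t_0+w}\nu_n^+(\{L\ge t\})\,\d t\le e^{-t_0}\mathbb{E}_{\mu^{(n)}}[|\nabla L|\1\{t_0\le L<t_0+w\}]$. Hence some level $t$ in the window satisfies this bound divided by $w$.
\item Hoeffding's inequality, applied to the independent bounded summands of $|\nabla L|^2$, makes the event $\{|\nabla L|^2>n(\lambda I(\mu_1\Vert\nu)+(1-\lambda)I(\mu_2\Vert\nu)+\eta)\}$ exponentially rare. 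Its contribution is therefore negligible against the slab.
\item Stone's theorem at fixed scales then bounds $\nu_n(\{L\ge t\})$ from below. This gives a perimeter-to-volume ratio at most $e^{w}(1+o(1))\sqrt{n(\breve{\Theta}(\alpha)+\epsilon+\eta)}$, with volume still below $a_n$.
\item Let $w,\eta,\epsilon\downarrow0$.
\end{itemize}
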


Large-deviation principle for isoperimetry was first established
by the author in \cite{yu2025large} but under the condition that
curvature is bounded below when $\nu$ is a Gaussian-like probability
measure. Theorem \ref{thm:LDP} removes this limitation. Moreover,
the proof of Theorem \ref{thm:LDP} is much simpler than the one in
\cite{yu2025large}.

Although the theorems above are valid for spaces that are general
enough, they do not cover isoperimetric problems for log-concave probability
measures defined on closed convex subsets of Euclidean spaces. To
cover them, we extend the theorem above to nonsmooth subspaces of
the Riemannian manifold $\M$. 

\begin{theorem}[Nonsmooth Subspaces] \label{thm:approximate} Let
$\nu$ be an absolutely continuous probability measure with density
$f$ (w.r.t. Riemannian volume) on a finite-dimensional smooth complete
oriented connected Riemannian manifold $\M$. Moreover, suppose that
there exists a sequence of probability measures $\nu_{m},m\in\mathbb{N}$
with densities $f_{m}$ (w.r.t. Riemannian volume) satisfying Conditions
\ref{cond:smoothness} and \ref{cond:convexity} and $f_{m}\ge(1-1/m)f$.
Then, \eqref{eq:CLT}, \eqref{eq:MDP}, and \eqref{eq:LDP} still
hold for $(\M,d,\nu)$. \end{theorem}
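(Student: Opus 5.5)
Each of \eqref{eq:CLT}, \eqref{eq:MDP} and \eqref{eq:LDP} has an upper and a lower bound, and I would treat them separately. The upper bounds (the $\limsup$ of $I_{\inf}(a)/(a\sqrt{2\ln(1/a)})$ is at most $\sqrt{K_{\mathrm{LS}}}$, and its analogues) come from Theorem 1 of \cite{yu2025large}. That result is a test-set construction built from near-optimal measures in the (nonlinear) LSI, and it needs no curvature. I expect it to extend to the Polish setting $(\M,d,\nu)$ directly, using Lipschitz densities as described after Definition \ref{def:nonlinearLS}. So only the lower bounds need work, and for these I would transfer the smooth results to $\nu$ via the $\nu_m$.

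\emph{Step 1: comparison of isoperimetric profiles.} Write $\nu_{m,n}:=\nu_m^{\otimes n}$. Since $f_m\ge(1-1/m)f$, we have $\nu_{m,n}\ge(1-1/m)^n\nu_n$ setwise. Fix a Borel $A\subseteq\M^n$ with $\nu_n(A)=a$. Then
\[
\nu_{m,n}(A^r)-\nu_{m,n}(A)=\nu_{m,n}(A^r\setminus A)\ge(1-1/m)^n\bigl(\nu_n(A^r)-\nu_n(A)\bigr),
\]
so $\nu_{m,n}^+(A)\ge(1-1/m)^n\nu_n^+(A)$. The measure also changes: $b:=\nu_{m,n}(A)\ge(1-1/m)^n a$. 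To get an upper bound on $b$, note $\nu_m(B)\le 1-(1-1/m)\nu(B^c)=1/m+(1-1/m)\nu(B)$, but this does not tensorize well. Instead I would apply the LSI lower bound of Theorem \ref{thm:finite-dimensional} to $\nu_m$ in its monotone form. The right-hand side of \eqref{eq:theta} has the form $b\,g(\ln(1/b))$, and I would use monotonicity properties of $b\mapsto I_n^{(m)}(b)$ (e.g.\ concavity of the profile under $\mathrm{CD}(0,\infty)$, or monotonicity of $I_n(b)/b$ for small $b$) so that $b\ge(1-1/m)^na$ suffices. If this fails, I would restrict to $A$ inside a large compact set and use total-variation convergence $\nu_m\to\nu$ there.

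\emph{Step 2: constants.} This gives
\[
I_n^{\nu}(a)\ge(1-1/m)^{-n}\cdot\frac{(1-1/m)^na}{1+2\delta}\sqrt{n\breve\Theta_m\bigl(\tfrac1n(\ln\tfrac1a-C_{\delta,m})_+\bigr)}.
\]
The factors $(1-1/m)^n$ cancel, but the argument of $\breve\Theta_m$ contains $\ln(1/b)\le\ln(1/a)+n\ln\frac{m}{m-1}$. I would handle this by choosing $m=m_n\to\infty$ fast enough that $n/m_n\ll s_n$ (MDP case) or $n/m_n\to0$ (LDP case). The small-volume statement \eqref{eq:CLT} then follows by taking $n\to\infty$ along $a\to0$.

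\emph{Step 3: constant convergence.} It remains to show $\liminf_m \breve\Theta_m\ge\breve\Theta$ and $\liminf_m K_{\mathrm{LS}}(\nu_m)\ge K_{\mathrm{LS}}(\nu)$. For this I would test against $\mu=\rho\nu$ with Lipschitz $\rho$, which is allowed by the remark after Definition \ref{def:nonlinearLS}. The idea is to compare entropy and Fisher information under $\nu_m$ and $\nu$ using $f_m\ge(1-1/m)f$, together with the total-variation convergence $\nu_m\to\nu$ that follows from that bound since both are probability measures. I expect this step to be the main obstacle. Lower semicontinuity of these functionals goes in the wrong direction here, so a truncation or localization argument will be needed to control the mass of $\nu_m$ outside the support of $\nu$. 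The uniformity of $C_{\delta}$ in $m$ is a further obstacle, and I would extract it from the proof of Theorem \ref{thm:finite-dimensional}, expecting it to depend only on $\delta$.
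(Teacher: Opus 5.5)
There is a genuine gap in the lower bound: your Step 1 comparison points the wrong way. You correctly derive $(\nu_m^{\otimes n})^+(A)\ge(1-1/m)^n\nu^+_n(A)$, but this is an \emph{upper} bound $\nu_n^+(A)\le(1-1/m)^{-n}(\nu_m^{\otimes n})^+(A)$ on the $\nu$-perimeter. Your Step 2 display, $I_n^{\nu}(a)\ge(1-1/m)^{-n}\cdot(\text{lower bound on }I^{(m)}_n(b))$, needs the reverse, $\nu_n^+(A)\gtrsim(\nu_m^{\otimes n})^+(A)$, i.e.\ an upper bound of $f_m$ by $f$. You do not have this, and it fails in the motivating examples. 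Take $\nu$ uniform on a convex body $K$ and $\nu_m$ smooth and positive on all of $\mathbb{R}^k$. The $\nu$-perimeter of $K$ is $0$, but its $\nu_m$-perimeter is positive, because $\nu$ does not see $\partial K$. Coupling $m=m_n$, localizing, or using total-variation convergence cannot repair this, since perimeters are not controlled by one-sided density bounds or TV-closeness. Step 3 is also left open, and $\liminf_m K_{\mathrm{LS}}(\nu_m)\ge K_{\mathrm{LS}}(\nu)$ is the hard direction: passing an LSI to a weak limit gives the opposite inequality.

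The paper avoids both problems by transferring \emph{volume growth}, not perimeter. Fix $n$ and take an isoperimetric minimizer $A_m$ of the smooth $\nu_m^{\otimes n}$ with $\nu_m^{\otimes n}(A_m)=a$. Theorem 12 of \cite{yu2025large} gives $\nu_m^{\otimes n}(A_m^r)\le a\,e^{rI_{\nu_m^{\otimes n}}(a)/a}$. Now your hypothesis is used in the direction it actually supports: $\nu^{\otimes n}(A_m^r)\le(1-1/m)^{-n}a\,e^{rI_{\nu_m^{\otimes n}}(a)/a}$. Lemma \ref{lem:LSC} is then applied with $P=\nu^{\otimes n}$ itself, which needs no curvature. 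The test measure $Q_\delta$ therefore enters the LSI of $\nu$, so the constant that appears is $K_{\mathrm{LS}}(\nu)$ and no convergence of LSI constants is needed. The errors from the factor $(1-1/m)^{-n}$ and from $a_m:=\nu^{\otimes n}(A_m)\to a$ (by TV convergence) vanish as $m\to\infty$ with $n$ fixed, so no coupling $m_n$ is required. The resulting lower bound on $I_{\nu_m^{\otimes n}}(a)$ passes to $I_{\nu^{\otimes n}}(a)$ via E. Milman's theorem \cite[Theorem 6.10]{milman2009role}: $I_{\nu^{\otimes n}}(a)=\lim_m I_{\nu_m^{\otimes n}}(a)$ under convergence from above. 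This profile-convergence result is the ingredient your plan lacks for getting any lower bound on the $\nu$-profile from the $\nu_m$-profiles. It also supplies the concavity of $I_{\nu^{\otimes n}}$ used in the upper-bound reduction, where the paper cites Theorem 2 rather than Theorem 1 of \cite{yu2025large}.
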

\begin{rem}
The convergence of $\{\nu_{m}\}$ here is called ``convergence from
above'' by E. Milman \cite{milman2009role}. In addition, another
notion, ``convergence from within'', was also introduced by him.
Theorem \ref{thm:approximate} still holds if convergence of $\{\nu_{m}\}$
from above is replaced by convergence from within.
\end{rem}

\begin{example}[Log-Concave Probability Measures]
\label{exa:Hypercube} All smooth log-concave probability densities
fully supported on Euclidean spaces satisfy Conditions \ref{cond:smoothness}
and \ref{cond:convexity}. Moreover, by standard regularization techniques
(e.g., the Moreau-Yosida regularization and convolution with Gaussian
kernels), all log-concave probability densities defined on closed
convex subsets of Euclidean spaces can be approximated by smooth log-concave
probability densities defined on the whole Euclidean spaces in the
sense as in Theorem \ref{thm:approximate}. Thus, \eqref{eq:CLT},
\eqref{eq:MDP}, and \eqref{eq:LDP} hold for all (smooth or non-smooth)
log-concave probability measures on Euclidean spaces (supported on
the whole spaces or convex subsets). The standard Gaussian measure
and the uniform measure on the hypercube $[0,1]^{n}$ are two of the
most important examples; $K_{\mathrm{LS}}=1$ for the former and $K_{\mathrm{LS}}=\pi^{2}$
for the latter. 
\end{example}

\section{Proof of Theorem \ref{thm:finite-dimensional}}

A key tool is the following lemma.

\begin{lemma}[Log-Sobolev-Concentration Lemma] \label{lem:LSC} Let
$(\X,d,P)$ be a metric probability space on which the Fisher information
is defined in the usual Sobolev sense (see e.g. \cite{yu2025large}).
Let $A\subseteq\X$ be closed with $P(A)=a\in(0,1).$ Assume that
for some $h>0$, 
\begin{equation}
P(A^{r})\le ae^{hr},\quad\forall r\ge0,\label{eq:growth}
\end{equation}
where $A^{r}:=\{x\in\X:d(x,A)\le r\}.$ Then, for every $\delta>0$,
there exists a probability measure $Q_{\delta}\ll P$ such that 
\begin{align}
D(Q_{\delta}\Vert P) & \ge(\ln\frac{1}{a}-C_{\delta})_{+},\label{eq:entropy-l}\\
I(Q_{\delta}\Vert P) & \le(1+2\delta)^{2}h^{2},\label{eq:fisher-u}
\end{align}
where 
\begin{equation}
C_{\delta}:=\frac{1+2\delta}{\delta}\ln\frac{1+\delta}{\delta}.\label{eq:Cdelta}
\end{equation}
Here $h$ can be chosen as the best one $h=\sup_{r>0}\frac{1}{r}\ln\frac{P(A^{r})}{a}$.

\end{lemma}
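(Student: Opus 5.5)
The plan is to build $Q_\delta$ explicitly as an exponential tilt of $P$ by the distance to $A$. Put $t(x):=d(x,A)$, which is $1$-Lipschitz, choose a rate $\lambda:=(1+2\delta)h$, and set
\[
\frac{\d Q_\delta}{\d P}=\rho:=\frac{e^{-\lambda t}}{Z},\qquad Z:=\int e^{-\lambda t}\,\d P .
\]
The Fisher bound \eqref{eq:fisher-u} is then immediate. Since $\sqrt{\rho}=Z^{-1/2}e^{-\lambda t/2}$ is Lipschitz and $|\nabla t|\le 1$, we have $|\nabla\sqrt{\rho}|_*\le\frac{\lambda}{2}\sqrt{\rho}$. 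Hence
\[
I(Q_\delta\|P)=4\int|\nabla\sqrt\rho|_*^2\,\d P\le\lambda^2=(1+2\delta)^2h^2 .
\]

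The main work is the entropy lower bound \eqref{eq:entropy-l}. First I would control the normalizer using the layer-cake formula together with the growth assumption \eqref{eq:growth}. For any $\mu>h$,
\[
\int e^{-\mu t}\,\d P=\int_0^\infty \mu e^{-\mu r}P(A^r)\,\d r\le a\frac{\mu}{\mu-h}.
\]
In particular $a\le Z\le a\frac{1+2\delta}{2\delta}$, where the lower bound comes from $t=0$ on $A$. Next I would write
\[
D(Q_\delta\|P)=-\ln Z-\lambda\,\mathbb{E}_{Q_\delta}[t].
\]
The first term is at least $\ln\frac1a-\ln\frac{1+2\delta}{2\delta}$. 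For the second term I would use the intermediate rate $\mu=(1+\delta)h$, which lies strictly between $h$ and $\lambda$. The elementary inequality $t e^{-\lambda t}\le\frac{e^{-\mu t}}{e(\lambda-\mu)}$ then gives
\[
\lambda\,\mathbb{E}_{Q_\delta}[t]\le\frac{\lambda}{Z}\cdot\frac{a\mu}{e(\lambda-\mu)(\mu-h)}\le\frac{(1+2\delta)(1+\delta)}{e\,\delta^2},
\]
using $Z\ge a$. Equivalently, one can apply the Donsker--Varadhan inequality $D(Q\|P)\ge\mathbb{E}_Q[g]-\ln\mathbb{E}_Pe^{g}$ with $g=-\mu t$. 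Either way, $D(Q_\delta\|P)\ge\ln\frac1a-C'_\delta$ for an explicit constant $C'_\delta$ depending only on $\delta$.

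If $\ln\frac1a-C_\delta\le0$, then $Q_\delta=P$ already works, since $D=I=0$. So only the case where this quantity is positive matters.

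The step I expect to be the main obstacle is matching the exact constant $C_\delta=\frac{1+2\delta}{\delta}\ln\frac{1+\delta}{\delta}$. The crude split above produces a constant of the same order as $\delta\to0$ but not this precise formula. To recover it, I would optimize the choice of the intermediate rate $\mu\in(h,\lambda)$ more carefully. Alternatively, I would estimate $\mathbb{E}_{Q_\delta}[t]$ through the exact layer-cake expression
\[
\int te^{-\lambda t}\,\d P=\int_0^\infty(\lambda r-1)e^{-\lambda r}\bigl(P(A^r)-a\bigr)\,\d r,
\]
bounding $P(A^r)-a\le a(e^{hr}-1)$ wherever $\lambda r>1$. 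A ratio of the form $\ln\frac{1+\delta}{\delta}$ suggests truncating the tilt at radius $R=\frac{1}{\delta h}\ln\frac{1+\delta}{\delta}$. That is, I would use $\rho\propto e^{-\lambda\min(t,R)}$, or restrict to $A^R$, and then compare $P(A^R)\le a\left(\frac{1+\delta}{\delta}\right)^{1/\delta}$ against the tilt. This is the variant I would try first if the untruncated computation does not close with the stated constant. The Fisher bound is unaffected by this change, since a truncated $t$ is still $1$-Lipschitz.

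Finally, the remark that the best admissible $h$ is $\sup_{r>0}\frac1r\ln\frac{P(A^r)}{a}$ is immediate: this is the smallest $h$ for which \eqref{eq:growth} holds.
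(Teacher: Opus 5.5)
Your construction is the paper's: the same tilt $\d Q_\delta/\d P=e^{-\lambda t}/Z(\lambda)$ with $\lambda=(1+2\delta)h$, the same Fisher bound, and the same normalizer bounds $a\le Z(\mu)\le a\frac{\mu}{\mu-h}$. There is a genuine gap in the entropy estimate, which is the only nontrivial step: the bound you actually prove does not give the stated constant. You get $D(Q_\delta\Vert P)\ge\ln\frac1a-C'_\delta$ with $C'_\delta=\ln\frac{1+2\delta}{2\delta}+\frac{\lambda\mu}{e(\lambda-\mu)(\mu-h)}$. Even at the optimal $\mu=\sqrt{\lambda h}$, the second term equals $\frac{1+2\delta}{e(\sqrt{1+2\delta}-1)^2}\sim\frac{1}{e\delta^2}$, while $C_\delta\sim\frac1\delta\ln\frac1\delta$. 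So it is not ``of the same order,'' and no choice of $\mu$ repairs it. At the other end, $\frac{\lambda}{\lambda-\mu}\cdot\frac{\mu}{\mu-h}>1$ forces $C'_\delta>1/e$, whereas $C_\delta\to0$ as $\delta\to\infty$. The loss comes from bounding the numerator $\int te^{-\lambda t}\,\d P$ and the denominator $Z(\lambda)$ by separate worst cases that never occur together. When $P(A^r)=ae^{hr}$ over a long range, the numerator is of order $a/(\delta^2h)$, but then $Z(\lambda)\approx a\frac{1+2\delta}{2\delta}$, not $a$. The truncation idea rests on a misreading of where $\ln\frac{1+\delta}{\delta}$ comes from. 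For the downstream small-volume, moderate- and large-deviation theorems any finite $\delta$-dependent constant would suffice, but the lemma as stated is not proved.

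The missing step is one you actually named: tie $\mathbb{E}_{Q_\delta}[t]$ to the normalizers instead of bounding it in isolation. Since $\mathbb{E}_{Q_\delta}[e^{(\lambda-\mu)t}]=Z(\mu)/Z(\lambda)$, Jensen gives $(\lambda-\mu)\mathbb{E}_{Q_\delta}[t]\le\ln Z(\mu)-\ln Z(\lambda)$. This is the same as your Donsker--Varadhan inequality with $g=-\mu t$, once you eliminate $\mathbb{E}_{Q_\delta}[t]$ using the identity $D=-\ln Z(\lambda)-\lambda\mathbb{E}_{Q_\delta}[t]$. Substituting into that identity gives
\[
D(Q_\delta\Vert P)\ge\frac{\mu}{\lambda-\mu}\ln Z(\lambda)-\frac{\lambda}{\lambda-\mu}\ln Z(\mu).
\]
Now $\ln Z(\lambda)$ has a positive coefficient and $\ln Z(\mu)$ a negative one. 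So $Z(\lambda)\ge a$ and $Z(\mu)\le a\frac{\mu}{\mu-h}$ give $D\ge\ln\frac1a-\frac{\lambda}{\lambda-\mu}\ln\frac{\mu}{\mu-h}$. With $\mu=(1+\delta)h$ this is exactly $\ln\frac1a-C_\delta$. The factor $\ln\frac{1+\delta}{\delta}$ is simply $\ln\frac{\mu}{\mu-h}$, the bound on $Z(\mu)/a$, and no truncation is needed. This is precisely the paper's proof.
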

\begin{proof}[Proof of Lemma \ref{lem:LSC}]
Let $R(x):=d(x,A)$ and, for $u>1$, define 
\[
Z(u):=\int_{\X}e^{-uhR(x)}\,\d P(x).
\]

Since $R=0$ on $A$, it holds that $Z(u)\ge a.$ 

We also obtain an upper bound. By layer-cake representation, we obtain
that 
\begin{align}
Z(u) & =uh\int^{\infty}_{0}e^{-uhr}P(R\le r)\,\d r.
\end{align}
By \eqref{eq:growth}, 
\begin{align}
Z(u)\le auh\int^{\infty}_{0}e^{-(u-1)hr}\,\d r & =a\frac{u}{u-1}.\label{eq:Z-upper}
\end{align}
Hence 
\begin{equation}
a\le Z(u)\le a\frac{u}{u-1}.\label{eq:Z-two-sided}
\end{equation}

Now fix $\delta>0$ and set $s:=1+2\delta,\,t:=1+\delta.$ Define
a probability measure $Q_{\delta}$ by 
\begin{equation}
\frac{\d Q_{\delta}}{\d P}=\frac{e^{-shR}}{Z(s)}.\label{eq:q-def}
\end{equation}

We first estimate its relative entropy. Since 
\[
\mathbb{E}_{Q_{\delta}}\bigl[e^{(s-t)hR}\bigr]=\frac{Z(t)}{Z(s)},
\]
By Jensen's inequality, 
\[
(s-t)h\,\mathbb{E}_{Q_{\delta}}R\le\ln Z(t)-\ln Z(s).
\]
Therefore, 
\begin{align}
D(Q_{\delta}\Vert P) & =-sh\,\mathbb{E}_{Q_{\delta}}R-\ln Z(s)\nonumber \\
 & \ge-\frac{s}{s-t}\bigl(\ln Z(t)-\ln Z(s)\bigr)-\ln Z(s)\nonumber \\
 & =\frac{t}{s-t}\ln Z(s)-\frac{s}{s-t}\ln Z(t).\label{eq:D-intermediate}
\end{align}

Using $Z(s)\ge a$ and $Z(t)\le a\frac{t}{t-1},$ we obtain 
\begin{align}
D(Q_{\delta}\Vert P) & \ge\frac{t}{s-t}\ln a-\frac{s}{s-t}\left(\ln a+\ln\frac{t}{t-1}\right)\nonumber \\
 & =\ln\frac{1}{a}-\frac{s}{s-t}\ln\frac{t}{t-1}.
\end{align}
Since $\frac{s}{s-t}=\frac{1+2\delta}{\delta},\,\frac{t}{t-1}=\frac{1+\delta}{\delta},$
this is exactly 
\[
D(Q_{\delta}\Vert P)\ge\ln\frac{1}{a}-C_{\delta}.
\]
Since relative entropy is nonnegative, \eqref{eq:entropy-l} holds. 

It remains to estimate the Fisher information. The distance function
$R$ is $1$-Lipschitz and satisfies $|\nabla R|\le1\,\text{a.e.}$
Hence 
\[
\nabla\ln\frac{\d Q_{\delta}}{\d P}=-sh\nabla R\,\text{a.e.}
\]
which implies 
\begin{align}
I(Q_{\delta}\Vert P) & =\int\left|\nabla\ln\frac{\d Q_{\delta}}{\d P}\right|^{2}\d Q_{\delta}=s^{2}h^{2}\int|\nabla R|^{2}\,\d Q_{\delta}\nonumber \\
 & \le s^{2}h^{2}=(1+2\delta)^{2}h^{2}.
\end{align}
\end{proof}

We now apply Lemma~\ref{lem:LSC} to an isoperimetric minimizer and
then establish the bridge between isoperimetric and log-Sobolev inequalities. 

Let $A\subseteq\M^{n}$ be an isoperimetric minimizer of volume $a$
given in Theorem~12 of \cite{yu2025large}: 
\[
\nu_{n}(A)=a,\qquad\nu^{+}_{n}(A)=I_{n}(a).
\]
Define $h:=\frac{I_{n}(a)}{a}.$ Theorem~12 of \cite{yu2025large}
states that 
\[
\frac{1}{r}\ln\frac{\nu_{n}(A^{r})}{a}\le\frac{I_{n}(a)}{a},\;\forall r>0.
\]

Applying Lemma~\ref{lem:LSC} with $P=\nu_{n}$ and substituting
$Q_{\delta}$ into the nonlinear log-Sobolev inequality, 
\[
\frac{1}{n}I(Q_{\delta}\Vert\nu_{n})\ge\breve{\Theta}(\frac{1}{n}D(Q_{\delta}\Vert\nu_{n})).
\]
Combining this with \eqref{eq:entropy-l} and \eqref{eq:fisher-u}
yields
\[
\frac{1}{n}(1+2\delta)^{2}h^{2}\ge\breve{\Theta}\left(\frac{1}{n}(\ln\frac{1}{a}-C_{\delta})_{+}\right),
\]
i.e., \eqref{eq:theta}.  

\section{Proof of Theorem \ref{thm:main}}

We now prove Theorem \ref{thm:main}. Taking the infimum over $n$
in \eqref{eq:linear} yields 
\begin{equation}
I_{\inf}(a)\ge\frac{a}{1+2\delta}\sqrt{2K_{\mathrm{LS}}\left(\ln\frac{1}{a}-C_{\delta}\right)_{+}}.\label{eq:Iinf}
\end{equation}
Therefore 
\begin{equation}
\frac{I_{\inf}(a)}{a\sqrt{2\ln(1/a)}}\ge\frac{\sqrt{K_{\mathrm{LS}}}}{1+2\delta}\sqrt{\left(1-\frac{C_{\delta}}{\ln(1/a)}\right)_{+}}.\label{eq:ratio}
\end{equation}

For every fixed $\delta>0$, let $a\downarrow0$. Then, we obtain
\[
\liminf_{a\downarrow0}\frac{I_{\inf}(a)}{a\sqrt{2\ln(1/a)}}\ge\frac{\sqrt{K_{\mathrm{LS}}}}{1+2\delta}.
\]
Letting $\delta\downarrow0$ gives 
\begin{equation}
\liminf_{a\downarrow0}\frac{I_{\inf}(a)}{a\sqrt{2\ln(1/a)}}\ge\sqrt{K_{\mathrm{LS}}}.\label{eq:liminf}
\end{equation}
Equivalently, $K^{-}_{\mathrm{IS}}\ge K_{\mathrm{LS}}.$ 

The reverse inequality $K^{-}_{\mathrm{IS}}\le K_{\mathrm{LS}}$ was
established in Theorem~3 of \cite{yu2025large}. Consequently, $K^{-}_{\mathrm{IS}}=K_{\mathrm{LS}}.$ 

We next show that the liminf in the definition of $K^{-}_{\mathrm{IS}}$
is in fact a limit by using an argument in \cite{yu2025large}.

Let $\breve{\Theta}$ denote the optimal convex entropy--Fisher-information
profile \cite{yu2025large} so that 
\begin{equation}
\frac{1}{n}I(\mu\Vert\nu^{\otimes n})\ge\breve{\Theta}\left(\frac{D(\mu\Vert\nu^{\otimes n})}{n}\right).\label{eq:tensor}
\end{equation}

Theorem~1 of \cite{yu2025large} implies that for each fixed $\alpha>0$,
\begin{equation}
\limsup_{m\to\infty}\frac{I_{m}(e^{-m\alpha})}{e^{-m\alpha}\sqrt{m}}\le\sqrt{\breve{\Theta}(\alpha)}.\label{eq:fixed-rate}
\end{equation}

Fix $\alpha>0$. For $a\in(0,1)$, write $s:=\ln\frac{1}{a}$ and
$m:=\left\lceil \frac{s}{\alpha}\right\rceil .$ Set $b:=e^{-m\alpha}.$
Then $b\le a.$ Since $I_{m}$ is concave and $I_{m}(0)=0$, the function
$u\mapsto\frac{I_{m}(u)}{u}$ is nonincreasing on $(0,1)$. Hence
\[
\frac{I_{\inf}(a)}{a}\le\frac{I_{m}(a)}{a}\le\frac{I_{m}(b)}{b}.
\]
Thus 
\begin{align*}
\frac{I_{\inf}(a)}{a\sqrt{2s}} & \le\frac{I_{m}(e^{-m\alpha})}{e^{-m\alpha}\sqrt{m}}\sqrt{\frac{m}{2s}}.
\end{align*}
As $a\downarrow0$, we obtain $m\to\infty,\,\frac{m}{s}\to\frac{1}{\alpha}.$
Using \eqref{eq:fixed-rate}, 
\[
\limsup_{a\downarrow0}\frac{I_{\inf}(a)}{a\sqrt{2\ln(1/a)}}\le\sqrt{\frac{\breve{\Theta}(\alpha)}{2\alpha}}.
\]
Now let $\alpha\downarrow0$. By the definition of the optimal linear
log-Sobolev constant, 
\begin{equation}
\lim_{\alpha\downarrow0}\frac{\breve{\Theta}(\alpha)}{2\alpha}=K_{\mathrm{LS}}.\label{eq:slope}
\end{equation}
Therefore,
\begin{equation}
\limsup_{a\downarrow0}\frac{I_{\inf}(a)}{a\sqrt{2\ln(1/a)}}\le\sqrt{K_{\mathrm{LS}}}.\label{eq:limsup}
\end{equation}

Combining \eqref{eq:liminf} and \eqref{eq:limsup}, 
\[
\lim_{a\downarrow0}\frac{I_{\inf}(a)}{a\sqrt{2\ln(1/a)}}=\sqrt{K_{\mathrm{LS}}}.
\]

\section{Proof of Theorem \ref{thm:MDP} }

The lower bound follows immediately from Theorem~\ref{thm:finite-dimensional}:
for every fixed $\delta>0$, 
\[
\frac{I_{n}(a_{n})}{a_{n}\sqrt{2s_{n}}}\ge\frac{\sqrt{K_{\mathrm{LS}}}}{1+2\delta}\sqrt{\left(1-\frac{C_{\delta}}{s_{n}}\right)_{+}}.
\]
Since $s_{n}\to\infty$, 
\[
\liminf_{n\to\infty}\frac{I_{n}(a_{n})}{a_{n}\sqrt{2s_{n}}}\ge\frac{\sqrt{K_{\mathrm{LS}}}}{1+2\delta}.
\]
Letting $\delta\downarrow0$ gives 
\begin{equation}
\liminf_{n\to\infty}\frac{I_{n}(a_{n})}{a_{n}\sqrt{2s_{n}}}\ge\sqrt{K_{\mathrm{LS}}}.\label{eq:MD}
\end{equation}

For the upper bound, we use an argument in \cite{yu2025large}. Fix
$\alpha>0$ and define $m_{n}:=\left\lceil \frac{s_{n}}{\alpha}\right\rceil ,\,b_{n}:=e^{-m_{n}\alpha}.$
Because $\frac{s_{n}}{n}\to0,$ we have $m_{n}\le n$ for all sufficiently
large $n$.

By monotonicity of $n\mapsto I_{n}(a)$ for any $a$, $I_{n}(a_{n})\le I_{m_{n}}(a_{n}).$
Since $b_{n}\le a_{n}$ and $I_{m_{n}}(u)/u$ is nonincreasing, 
\[
\frac{I_{n}(a_{n})}{a_{n}}\le\frac{I_{m_{n}}(a_{n})}{a_{n}}\le\frac{I_{m_{n}}(b_{n})}{b_{n}}.
\]
Thus 
\[
\frac{I_{n}(a_{n})}{a_{n}\sqrt{2s_{n}}}\le\frac{I_{m_{n}}(e^{-m_{n}\alpha})}{e^{-m_{n}\alpha}\sqrt{m_{n}}}\sqrt{\frac{m_{n}}{2s_{n}}}.
\]
Since $m_{n}\to\infty,\,\frac{m_{n}}{s_{n}}\to\frac{1}{\alpha},$
the fixed-rate estimate \eqref{eq:fixed-rate} yields 
\[
\limsup_{n\to\infty}\frac{I_{n}(a_{n})}{a_{n}\sqrt{2s_{n}}}\le\sqrt{\frac{\breve{\Theta}(\alpha)}{2\alpha}}.
\]
Letting $\alpha\downarrow0$ and using \eqref{eq:slope}, 
\[
\limsup_{n\to\infty}\frac{I_{n}(a_{n})}{a_{n}\sqrt{2s_{n}}}\le\sqrt{K_{\mathrm{LS}}}.
\]
Together with \eqref{eq:MD}, this proves the result. 

\section{Proof of Theorem \ref{thm:LDP} }

The lower bound follows immediately from Theorem~\ref{thm:finite-dimensional}:
for every fixed $\delta>0$, 
\[
\frac{I_{n}(a_{n})}{a_{n}\sqrt{n}}\ge\frac{1}{1+2\delta}\sqrt{\breve{\Theta}\left((\alpha-\frac{C_{\delta}}{n})_{+}\right)},
\]
where $a_{n}=e^{-n\alpha}$. Letting $n\to\infty$ and by continuity
of $\breve{\Theta}$, 
\[
\liminf_{n\to\infty}\frac{I_{n}(a_{n})}{a_{n}\sqrt{n}}\ge\frac{1}{1+2\delta}\sqrt{\breve{\Theta}(\alpha)}.
\]
Letting $\delta\downarrow0$ gives 
\begin{equation}
\liminf_{n\to\infty}\frac{I_{n}(a_{n})}{a_{n}\sqrt{n}}\ge\sqrt{\breve{\Theta}(\alpha)}.\label{eq:MD-1}
\end{equation}

For the upper bound, we already established in \cite{yu2025large}
that 
\[
\limsup_{n\to\infty}\frac{I_{n}(a_{n})}{a_{n}\sqrt{n}}\le\sqrt{\breve{\Theta}(\alpha)}.
\]

\section{Proof of Theorem \ref{thm:approximate}}

Here we only prove \eqref{eq:CLT}. The proofs of \eqref{eq:MDP}
and \eqref{eq:LDP} are similar and hence omitted. 

Observe that given each $n$, $\{\nu^{\otimes n}_{m}\}$ converges
to $\nu^{\otimes n}$ in total-variation distance and in addition
$\nu^{\otimes n}_{m}(B)\ge(1-1/m)^{n}\nu^{\otimes n}(B)$ for any
Borel set $B$. E. Milman \cite[Theorem 6.10]{milman2009role} showed
that in this case, for all $a\in[0,1]$, $I_{\nu^{\otimes n}}(a)=\lim_{m\to\infty}I_{\nu^{\otimes n}_{m}}(a),$
and consequently, $I_{\nu^{\otimes n}}$ is concave on $[0,1]$, and
thus continuous on $(0,1)$. Let $A_{m}$ be an isoperimetric minimizer
for $\nu^{\otimes n}_{m}$ of weighted volume $a$ in the standard
sense. By Theorem~12 of \cite{yu2025large}, for any $r>0$, 
\begin{align*}
\frac{I_{\nu^{\otimes n}_{m}}(a)}{a} & \ge\frac{1}{r}\ln\frac{\nu^{\otimes n}_{m}(A^{r}_{m})}{a}\ge\frac{1}{r}\ln\frac{(1-1/m)^{n}\nu^{\otimes n}(A^{r}_{m})}{a},
\end{align*}
where $a_{m}=\nu^{\otimes n}(A_{m})$. 

Applying Lemma \ref{lem:LSC}, 
\begin{align*}
\frac{I_{\nu^{\otimes n}_{m}}(a)}{a} & \ge\frac{1}{1+2\delta}\sqrt{2K_{\mathrm{LS}}\left(\ln\frac{1}{a_{m}}-C_{\delta}\right)_{+}}+\frac{n}{r}\ln(1-\frac{1}{m}).
\end{align*}

Since $|a_{m}-a|=|\nu^{\otimes n}(A_{m})-\nu^{\otimes n}_{m}(A_{m})|\le|\nu^{\otimes n}-\nu^{\otimes n}_{m}|_{\mathrm{TV}}$,
taking the limit as $m\to\infty$ yields $a_{m}\to a$. We hence obtain
that 
\begin{align*}
\frac{I_{\nu^{\otimes n}}(a)}{a} & \ge\frac{1}{1+2\delta}\sqrt{2K_{\mathrm{LS}}\left(\ln\frac{1}{a}-C_{\delta}\right)_{+}},
\end{align*}
which implies Proposition \ref{thm:finite-dimensional} is valid in
this setting. Following the same steps as \eqref{eq:Iinf}-\eqref{eq:liminf},
we obtain 
\begin{equation}
\liminf_{a\downarrow0}\frac{I_{\inf}(a)}{a\sqrt{2\ln(1/a)}}\ge\sqrt{K_{\mathrm{LS}}}.\label{eq:liminf-lower-1}
\end{equation}

As for the other direction, the proof is identical to the smooth setting
except that Theorem~1 of \cite{yu2025large} is replaced by Theorem~2
of \cite{yu2025large}. 

\section*{Acknowledgements}

\emph{Generative-AI use disclosure:} ChatGPT 6 was used to prove Lemma~\ref{lem:LSC}.
All mathematical claims are the full responsibility of the author.

\emph{Funding:} This work was supported by the National Key Research
and Development Program of China under grant 2023YFA1009604 and the
NSFC under grant 62101286.

\bibliographystyle{abbrv}
\bibliography{ref}

\end{document}